 \numberwithin{equation}{section}
\theoremstyle{plain}
\newtheorem{thm}{Theorem}[section]
\theoremstyle{definition}
\theoremstyle{remark}
\newcommand{\N}{\mathbb{N}}
\newcommand{\R}{\mathbb{R}}
\newcommand{\I}{\infty}
\newcommand{\bp}{\begin{proof}[\ensuremath{\mathbf{Proof}}]}
\newcommand{\bs}{\begin{proof}[\ensuremath{\mathbf{Solution}}]}
\newcommand{\ep}{\end{proof}}
\begin{document}

\title{A blowup criterion for ideal viscoelastic flow}

\author{Xianpeng Hu and Ryan Hynd\thanks{This material is based upon work supported by the National Science Foundation under Grant No. DMS-1004733.}\\
Courant Institute of Mathematical Sciences\\
New York University\\
251 Mercer Street\\
New York, NY 10012-1185 USA}
\maketitle

\setcounter{section}{1}
\begin{abstract}
We establish an analog of the Beale-Kato-Majda  criterion for singularities of smooth solutions of the system of PDE arising in the Oldroyd model for ideal viscoelastic flow.
\end{abstract}

\par It is well known that smooth solutions of the initial value problem associated with Euler's equations
\begin{equation}
\begin{cases}
\partial_tu +(u\cdot \nabla)u = -\nabla p\\
\hspace{.65in}\nabla \cdot u= 0
\end{cases},\quad (x,t)\in \R^3\times (0,T)
\end{equation}
exist for some finite time $T>0.$  Here $u=u(x,t)\in \R^3$ and $p=p(x,t)\in\R$ represent the local velocity and pressure of a perfect fluid, respectively. One of the major challenges in PDE theory is to deduce whether or not
finite time singularities do indeed occur.\footnote{A singularity is a time $T>0$ where $u(T)$ fails belong to a natural function space such as $C^{\infty}(\R^3;\R^3)$.}  A celebrated result of Beale, Kato,
and Majda asserts that if a solution $u$ of Euler's equations possesses a singularity at a finite time $T$, then necessarily
$$
\int^T_0 |(\nabla \times u)(t)|_{L^\I(\R^3)}dt=+\infty
$$
\cite{BKM}. In particular, the $L^\I(\R^3)$ norm of the vorticity must blow up as $t$ approaches $T$ from below
$$
\limsup_{t\rightarrow T^-}|(\nabla\times u)(t)|_{L^\I(\R^3)}=+\infty.
$$

\par Along with relatively standard energy estimates for $u$ and $\nabla\times u$, the key insight for Beale, Kato and Majda was a clever application of Kato's inequality:
\begin{equation}\label{BIOT}
|\nabla f|_{L^\I(\R^3)}\le C\left\{1 + (1+\ln^+|f|_{H^3(\R^3)})|\nabla \times f|_{L^\I(\R^3)} + |\nabla \times f|_{L^2(\R^3)}\right\}
\end{equation}
for all $f\in H^3(\R^3; \R^3)$ with
$$
\nabla\cdot f=0
$$
\cite{KATO}. Here $C$ is a universal constant,  and
$$
\ln^+ x=
\begin{cases}
\ln x, \quad x\ge 1\\
0,\quad x\le 1
\end{cases}.
$$
The above inequality was derived from a careful analysis of the Biot-Savart law
$$
f(x)= \int_{\R^3}\nabla\phi(x-y)\times (\nabla\times f(y))dy, \quad x\in \R^3,
$$
where $\phi(x)=1/4\pi |x|$ is the fundamental solution of Laplace's equation on $\R^3.$

\par Caflisch, Klapper, and Steel used these ideas to prove a similar blow up criterion for the ideal MHD equations
\begin{equation}
\begin{cases}
\partial_tu +(u\cdot \nabla)u = -\nabla\left(p +|b|^2/2\right) + (b\cdot \nabla)b\\
\; \partial_tb +(u\cdot \nabla)b = (b\cdot \nabla)u \\
\hspace{.65in}\nabla \cdot u= \nabla\cdot b=0
\end{cases},\quad (x,t)\in\R^3\times (0,T).
\end{equation}
They showed that if the solution to the inital value problem, with smooth divergence free data, has a singularity at time $T$, then it must be that
$$
\int^T_0 |(\nabla \times u)(t)|_{L^\I(\R^3)}+ |(\nabla \times b)(t)|_{L^\I(\R^3)}dt=+\infty
$$
\cite{Fish}.

\par In this short note, we develop these ideas further and establish an analogous blowup criterion for solutions of the following system of PDE

\begin{equation}\label{IDEAL}
\begin{cases}
\;\; \partial_t u +(u\cdot \nabla)u = -\nabla p + \nabla\cdot FF^t\\
\partial_t F +(u\cdot \nabla)F =  \nabla u F \\
\hspace{.73in}\nabla \cdot u= 0
\end{cases}, \quad (x,t)\in\R^3\times (0,T).
\end{equation}
This system of PDE arises in the Oldroyd
model for ideal viscoelastic flow i.e. a viscoelastic fluid whose
elastic properties dominate its behavior.  Here $F=F(x,t)\in\R^{3\times 3}$ represents the local deformation
gradient of the fluid.  See \cite{LLZ} for more
on this model.

\par We remark that global existence for solutions  near
equilibrium of the viscous analog of \eqref{IDEAL} has been verified by Lin et.al. \cite{LLZ} for two-dimensional flow and Lei et.al. \cite{Lei} for three-dimensional flow; 
see also the work Lin and Zhang \cite{Zhang} for a further discussion of these topics. In these references, three
assumed properties of the deformation gradient are
crucial for the methods employed: 
\begin{itemize}
\item The condition $\nabla\cdot F^t=0$ (see equation \eqref{divFtranspose} below);
\item The curl of the deformation gradient is of ``higher order" (see equation (3) in \cite{Lei});
\item The condition $\textrm{det} F=1$.
\end{itemize}
Note that the third condition is natural as the flow is assumed to be
incompressible. For the vanishing viscosity case,
Sideris and Thomases \cite{Sideris} also established the global existence
of smooth solutions near the equilibrium using the incompressible
limit by imposing a null condition on the elastic stress.

\par One motivation of this work is to consider which conditions in the
above list are necessary to assume when considering global smooth solutions 
without any type of smallness requirement.  The central result of this work suggests that 
even if the incompressibility of the deformation gradient is
imposed, smooth solutions of \eqref{IDEAL} may become singular in finite time. From this standpoint, the property that the curl of the deformation gradient is higher order is perhaps the most important when considering
global existence. 
\\
\par Before stating our main result, let us make a simple observation alluded to above.  Letting $F_k=Fe_k$ denote the columns of $F$ we take the divergence the second equation in \eqref{IDEAL} to arrive at
\begin{equation}\label{divFtranspose}
\partial_t(\nabla \cdot F_k) + (u\cdot \nabla)(\nabla \cdot F_k)=0, \quad k=1,2,3.
\end{equation}
Therefore, if $\nabla \cdot F_k=0$ initially, it will remain so for later times.  In what follows, we will make this assumption and also note that it implies the equality
$$
\nabla\cdot FF^t=\sum^3_{k=1}(F_k\cdot \nabla) F_k.
$$
With this reduction, equation \eqref{IDEAL} becomes
\begin{equation}\label{newIDEAL}
\begin{cases}
\hspace{.175in}\partial_tu +(u\cdot \nabla)u = -\nabla p + \sum^3_{k=1}(F_k\cdot \nabla) F_k\\
\partial_tF_k +(u\cdot \nabla)F_k =  (F_k\cdot\nabla )u \\
\hspace{.82in}\nabla \cdot u= \nabla \cdot F_k=0
\end{cases}
\end{equation}
for  $k=1,2,3$.   We are now ready to state and prove our main result.

\begin{thm}\label{mainthm}
Assume that
$$
u_0\in H^s(\R^3)\quad \text{and}\quad F_{k,0}\in H^s(\R^3)
$$
for $k=1,2,3$ and some $s\in \N, s\ge 3$. Furthermore, assume
$$
\nabla\cdot u_0=\nabla \cdot F_{k,0}=0.
$$
Then equation \eqref{newIDEAL} has a solution $u,F_k$ with respective initial data $u_0$, $F_{k,0}$ belonging to the space
\begin{equation}\label{SPACE}
C([0,T], H^s(\R^3))\cap C^1([0,T], H^{s-1}(\R^3)),
\end{equation}
provided
\begin{equation}\label{BKMcond}
\int^T_0 |(\nabla\times u)(t)|_{L^\I(\R^3)} +\sum^3_{k=1}|(\nabla \times F_k)(t)|_{L^\I(\R^3)}dt<\infty.
\end{equation}
In particular, solutions with smooth initial data remain smooth on $(0,T)$ provided \eqref{BKMcond} holds.
\end{thm}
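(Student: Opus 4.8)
The plan is to adapt the Beale--Kato--Majda strategy to the coupled system \eqref{newIDEAL}. By the standard local existence theory for symmetric hyperbolic--type systems, there is a maximal time $T^*>0$ for which a solution in the class \eqref{SPACE} exists, and if $T^*<\infty$ then necessarily $\limsup_{t\to T^*}\big(|u(t)|_{H^s}+\sum_k|F_k(t)|_{H^s}\big)=\infty$. So it suffices to show that \eqref{BKMcond} forces $|u(t)|_{H^s}+\sum_k|F_k(t)|_{H^s}$ to stay bounded on $[0,T]$; the theorem then follows by continuation. The first step is therefore to record the local theory and this continuation principle, and reduce to deriving an a priori bound on the $H^s$ norms under the hypothesis \eqref{BKMcond}.

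The second and main step is the energy estimate. Applying $D^\alpha$ for $|\alpha|\le s$ to the equations in \eqref{newIDEAL}, pairing with $D^\alpha u$ and $D^\alpha F_k$ in $L^2$, and summing, one controls the time derivative of $E(t):=\tfrac12\big(|u(t)|_{H^s}^2+\sum_k|F_k(t)|_{H^s}^2\big)$. The pressure term drops out after using $\nabla\cdot u=0$; the cross terms $\nabla\cdot(F_kF_k^t)$ and $(F_k\cdot\nabla)u$ largely cancel against each other (this is the symmetry structure that makes the viscoelastic system behave like MHD), and the remaining commutators are handled by the Moser/Kato--Ponce calculus inequalities, yielding
\begin{equation}\label{energyineq}
\frac{d}{dt}E(t)\le C\Big(|\nabla u(t)|_{L^\I}+\sum_{k=1}^3|\nabla F_k(t)|_{L^\I}\Big)E(t).
\end{equation}
The third step converts the $L^\I$ norms of the full gradients on the right of \eqref{energyineq} into $L^\I$ norms of the curls, which is exactly where the hypothesis \eqref{BKMcond} enters. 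Since $\nabla\cdot u=\nabla\cdot F_k=0$, each of $u$ and $F_k$ satisfies the hypotheses of Kato's inequality \eqref{BIOT} with $f=u$ and $f=F_k$. This gives, for each $t$,
\begin{equation}\label{gradbound}
|\nabla u|_{L^\I}+\sum_k|\nabla F_k|_{L^\I}\le C\Big\{1+\big(1+\ln^+(2E(t))\big)\big(|\nabla\times u|_{L^\I}+\sum_k|\nabla\times F_k|_{L^\I}\big)+|\nabla\times u|_{L^2}+\sum_k|\nabla\times F_k|_{L^2}\Big\},
\end{equation}
after bounding the $H^3$ norms appearing in \eqref{BIOT} by $H^s$ norms and hence by a power of $E(t)$ (absorbed into the logarithm up to constants), and bounding the $L^2$ norms of the curls by $\sqrt{E(t)}$ via a crude energy estimate controlling $|u(t)|_{L^2}+\sum_k|F_k(t)|_{L^2}$ (the basic $L^2$ conservation law for \eqref{newIDEAL}). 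Setting $y(t):=2E(t)+e$ and $\Lambda(t):=|\nabla\times u(t)|_{L^\I}+\sum_k|\nabla\times F_k(t)|_{L^\I}$, plugging \eqref{gradbound} into \eqref{energyineq} gives a differential inequality of the form $y'(t)\le C\big(1+\Lambda(t)\big)y(t)\ln y(t)$. A Gronwall-type argument for $\ln\ln y$ then yields
\begin{equation*}
\ln\ln y(t)\le \ln\ln y(0)+C\int_0^t\big(1+\Lambda(\tau)\big)d\tau,
\end{equation*}
so \eqref{BKMcond} (which is precisely $\int_0^T\Lambda<\infty$) bounds $y$, hence $E$, hence the $H^s$ norms, on $[0,T]$. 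Combined with the continuation principle of the first step, this proves the theorem.

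The main obstacle is Step 2: verifying that the genuinely new nonlinear terms coming from the elastic stress $\sum_k(F_k\cdot\nabla)F_k$ and the stretching term $(F_k\cdot\nabla)u$ combine so that no term worse than the right side of \eqref{energyineq} survives. Concretely one must check that $\sum_{|\alpha|\le s}\sum_k\int D^\alpha\big((F_k\cdot\nabla)F_k\big)\cdot D^\alpha u + D^\alpha\big((F_k\cdot\nabla)u\big)\cdot D^\alpha F_k\,dx$ is bounded by the product of $E(t)$ and the $L^\I$ norms of first derivatives; the top-order pieces $(F_k\cdot\nabla)D^\alpha F_k\cdot D^\alpha u$ and $(F_k\cdot\nabla)D^\alpha u\cdot D^\alpha F_k$ cancel after integration by parts using $\nabla\cdot F_k=0$, and every remaining commutator term has at most $s$ derivatives distributed so that one factor keeps at most one derivative in $L^\I$ while the other two sit in $L^2$, which is the regime covered by the calculus inequalities. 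The convection terms $(u\cdot\nabla)u$ and $(u\cdot\nabla)F_k$ are handled by the same commutator estimates exactly as in \cite{BKM}. Once this cancellation/commutator bookkeeping is in place, Steps 1 and 3 are routine.
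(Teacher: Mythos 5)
Your overall architecture (local existence plus continuation, $H^s$ energy estimate with cancellation of the top-order elastic terms, Kato's inequality, log-Gronwall) matches the paper's, and your Steps 1 and 2 are essentially the paper's reduction and its Step 1. But there is a genuine gap in your Step 3: the treatment of the additive term $|\nabla\times f|_{L^2}$ in Kato's inequality \eqref{BIOT}. You propose to bound $|\nabla\times u|_{L^2}+\sum_k|\nabla\times F_k|_{L^2}$ either by invoking ``the basic $L^2$ conservation law,'' which controls only $|u|_{L^2}+\sum_k|F_k|_{L^2}$ and says nothing about the curls (a curl costs a derivative), or by the crude bound $C\sqrt{E(t)}$. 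The second option is fatal to your Gronwall argument: substituting it into \eqref{energyineq} produces a term of order $E(t)^{3/2}$ on the right-hand side, i.e.
\begin{equation*}
E'(t)\le C\bigl(1+\ln^+(2E)\bigr)\Lambda(t)\,E(t)+C\,E(t)^{3/2},
\end{equation*}
and the superlinear piece $E^{3/2}$ is not tamed by the logarithm — the comparison ODE $E'=CE^{3/2}$ blows up in finite time independently of \eqref{BKMcond} — so the claimed inequality $y'\le C(1+\Lambda)\,y\ln y$ does not follow.

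What is missing is precisely the paper's Step 2: a separate, time-uniform $L^2$ bound on $w=\nabla\times u$ and $r_k=\nabla\times F_k$ obtained from the curled equations. Taking the curl of \eqref{newIDEAL} yields
\begin{equation*}
\partial_t w+(u\cdot\nabla)w=\sum_k(F_k\cdot\nabla)r_k-S,\qquad \partial_t r_k+(u\cdot\nabla)r_k=(F_k\cdot\nabla)w-T_k,
\end{equation*}
where $S$ and $T_k$ are bilinear in first derivatives; pairing with $w$ and $r_k$, the top-order transport/stretching terms cancel, and the remainders are bounded by $\bigl(|\nabla u|_{L^\infty}+\sum_k|\nabla F_k|_{L^\infty}\bigr)$ times $L^2$ norms of gradients, which are in turn controlled by $|w|_{L^2}+\sum_k|r_k|_{L^2}$ via $|\nabla v|_{L^2}\le C|\nabla\times v|_{L^2}$ for divergence-free $v$. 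Wait — to close this with \eqref{BKMcond} one must in fact bound the remainders by $\bigl(|w|_{L^\infty}+\sum_k|r_k|_{L^\infty}\bigr)\bigl(|w|_{L^2}^2+\sum_k|r_k|_{L^2}^2\bigr)$, which the structure of $S,T_k$ permits; Gronwall and \eqref{BKMcond} then give $w,r_k\in L^\infty((0,T);L^2)$. Only with this uniform bound in hand does the $|\nabla\times f|_{L^2}$ term in \eqref{BIOT} become a harmless constant, and your double-log (or the paper's single-log) Gronwall argument goes through. You should insert this estimate between your Steps 2 and 3.
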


\begin{proof}
The short time existence of solutions $u, F_k$ with respective
initial data $u_0$, $F_{k,0}$ belonging to the space \eqref{SPACE}
for some $T>0$ follows from routine arguments; it is
straightforward to adapt the arguments in chapter 3, section 2 of \cite{Majda}, for
instance. Moreover, if  $u,F_k$ do not belong to the space
\eqref{SPACE} then it must be that
$$
\limsup_{t\rightarrow T^-}\left(|u(t)|_{H^s(\R^3)}+\sum^3_{k=1}|F_k(t)|_{H^s(\R^3)}\right)=+\I.
$$
If this is not the case, there is $C_0>0$ such that $|u(t)|_{H^s(\R^3)}+\sum^3_{k=1}|F_k(t)|_{H^s(\R^3)}\le C_0$ for  $t<T$. Moreover, letting $t_1<T$ we can solve \eqref{newIDEAL} on the interval $(t_1,t_1 + T_0)$
for some $T_0>0$ only depending on $C_0$ and, in particular, independent of $t_1.$  By choosing $t_1\in (T-T_0,T)$, we have extended the original solution past the time $T$. However, in this scenario $u,F_k$ already belonged to the space \eqref{SPACE} by the local
existence of solutions.

\par Therefore, it suffices to show that if \eqref{BKMcond} holds then
\begin{equation}\label{FinalEst}
|u(t)|^2_{H^s}+\sum^3_{k=1}|F_k(t)|^2_{H^s}\le C, \quad 0\le t\le T
\end{equation}
for some constant $C$ depending only on $T$, the $H^s(\R^3)$ norms of the initial data, and
$$
M:=\int^T_0 |(\nabla\times u)(t)|_{L^\I(\R^3)} +\sum^3_{k=1}|(\nabla \times F_k)(t)|_{L^\I(\R^3)}dt<\I.
$$
As in \cite{BKM} and \cite{Fish},  the proof is accomplished in three steps: obtaining energy estimates on solutions, obtaining $L^2$ estimates for the curl of solutions, and applying the crucial estimate \eqref{BIOT}.  The proof is
virtually the same for any integer $s\ge 3$, so for simplicity we assume that $s=3$. For ease of notation, we will also denote $L^\I(\R^3), L^2(\R^3)$, and $H^3(\R^3) $ by $L^\I, L^2$, and $H^3$. We shall also write $\sum_k$ for $\sum^3_{k=1}$.
Finally, all integrals will be taken over the entire space $\R^3.$

\par {\bf Step 1} (Energy estimates).  We first show that there is a universal constant $C$ such that
\begin{equation}\label{ENERGY}
|u(t)|^2_{H^3}+\sum_k|F_k(t)|^2_{H^3}\le \left(|u_0|^2_{H^3}+\sum_k|F_{k,0}|^2_{H^3}\right)\exp\left(C \int^t_0|\nabla u|_{L^\infty}+\sum_k|\nabla F_k|_{L^\infty}ds\right)
\end{equation}
for $0\le t\le T$.  Let $\alpha$ be a multi-index with $|\alpha|\le 3$ and set
$$
v:= \partial_x^\alpha u\quad \text{and}\quad G_k:= \partial_x^\alpha F_k
$$
for $k=1,2,3.$ Note that

$$
\begin{cases}
\hspace{.23in}\partial_t v + (u\cdot \nabla) v = -\nabla(\partial_x^\alpha p) + \sum^3_{k=1}(F_k\cdot\nabla)G_k - Q\\
\partial_t G_k + (u\cdot \nabla)G_k = (F_k\cdot \nabla)v - R_k
\end{cases}
$$
where
$$
\begin{cases}
Q=\partial_x^\alpha((u\cdot \nabla)u) - (u\cdot \nabla)\partial_x^\alpha u-\sum_k\left[\partial_x^\alpha((F_k\cdot \nabla)F_k) - (F_k\cdot \nabla)\partial_x^\alpha F_k\right]\\
R_k=\partial_x^\alpha((u\cdot \nabla)F_k) - (u\cdot \nabla)\partial_x^\alpha F_k - [\partial_x^\alpha((F_k\cdot \nabla)u) - (F_k\cdot \nabla)\partial_x^\alpha u], \quad k=1,2,3
\end{cases}.
$$
As $\nabla\cdot u=\nabla\cdot F_k=0$, we have
\begin{align*}
\frac{d}{dt}\int \frac{|v|^2}{2}&=\int v\cdot \partial_t v \\
&= \int v\cdot \left( \sum_k(F_k\cdot\nabla)G_k - Q\right)\\
&= -\int  \sum_kG_k\cdot(F_k\cdot\nabla)v - \int v\cdot Q\\
&\le -\int  \sum_kG_k\cdot(F_k\cdot\nabla)v + |v|_{L^2}|Q|_{L^2}.
\end{align*}
Likewise,
\begin{align*}
\frac{d}{dt}\int \sum_k\frac{|G_k|^2}{2}&=\int \sum_kG_k\cdot \partial_t G_k \\
&=\int \sum_k-G_k \cdot R_k+ G_k\cdot(F_k\cdot\nabla)v \\
&\le \sum_k|G_k|_{L^2} |R_k|_{L^2}+ \int G_k\cdot(F_k\cdot\nabla)v.
\end{align*}
Consequently,

$$
\frac{d}{dt}\int \sum_k\frac{|v|^2}{2}+\frac{|G_k|^2}{2}\le |v|_{L^2}|Q|_{L^2}+\sum_k|G_k|_{L^2} |R_k|_{L^2}.
$$
Recall the identity,
$$
|\partial^\alpha_x(fg) - f\partial^\alpha_xg|_{L^2}\le C\{|f|_{H^3}|g|_{L^\infty} + |\nabla f|_{L^\infty}|g|_{H^{2}}\}
$$
for smooth $f,g$ and $|\alpha|\le 3$ \cite{BKM}; here $C$ is an absolute constant. This identity implies

$$
\begin{cases}
|Q|_{L^2}\le C\left\{|u|_{H^3}|\nabla u|_{L^\I}+\sum_k|F_k|_{H^3}|\nabla F_k|_{L^\I}\right\}\\
|R_k|_{L^2}\le C\left\{|u|_{H^3}|\nabla F_k|_{L^\I}+|F_k|_{H^3}|\nabla u|_{L^\I}\right\}, \quad k=1,2,3
\end{cases}.
$$
Hence,
\begin{align*}
\frac{d}{dt}\left(\frac{|v|^2}{2}+\sum_k\frac{|G_k|^2}{2}\right)& \le |v|_{L^2}\left(|u|_{H^3}|\nabla u|_{L^\I} +\sum_k|F_k|_{H^3}|\nabla F_k|_{L^\I}\right) \\
& + \sum_k|G_k|_{L^2}\left(|u|_{H^3}|\nabla F_k|_{L^\I} +|F_k|_{H^3}|\nabla u|_{L^\I}\right).
\end{align*}
Summing over all multi indices $|\alpha|\le 3$ gives
$$
\frac{d}{dt}\left(|u|_{H^s}^2+\sum_k|F_k|_{H^s}^2\right)\le C
\left(|\nabla u|_{L^\I} +\sum_k|\nabla F_k|_{L^\I}
\right)\left(|u|_{H^s}^2+\sum_k|F_k|_{H^s}^2\right)
$$
for some universal constant $C$. Inequality \eqref{ENERGY} now follows from Gronwall's inequality.

\par {\bf Step 2} ($L^2$ estimates).  Set
$$
w:=\nabla \times u\quad \text{and}\quad r_k:=\nabla \times F_k, \quad k=1,2,3.
$$
We claim
\begin{equation}\label{curlL2}
|w(t)|^2_{L^2}+\sum_k|r_k(t)|^2_{L^2}\le \left(|w_0|^2_{L^2}+\sum_k|r_{k,0}|^2_{L^2}\right)\exp\left(C\int^t_0 |w|_{L^\I}+\sum_k|r_k|_{L^\I}ds\right)
\end{equation}
for $0\le t\le T$ and some universal constant $C$.  With hypothesis \eqref{BKMcond}, the above inequality implies an $L^2$ bound on $w(t)$ and $r_k(t)$ that is independent of $t\in [0, T]. $   On our way to proving this bound, we recall a simple fact:
$u,v \in H^2(\R^3;\R^3)$, then
$$
\nabla\times[(u\cdot \nabla)v]=a(\nabla u,\nabla v) + (u\cdot \nabla)(\nabla \times v)
$$
where $a: \R^{3\times 3}\times \R^{3\times 3}\rightarrow \R^3$ is given by
$$
a(X,Y):=\sum^3_{i=1}(X^te_i)\times (Ye_i), \quad X,Y\in \R^{3\times 3}.
$$
Taking the curl of the first two equations in \eqref{newIDEAL} gives
$$
\begin{cases}
\hspace{.05in}\partial_t w+ (u\cdot \nabla) w =   \sum_k(F_k\cdot\nabla)r_k - S\\
\partial_t r_k + (u\cdot \nabla)r_k = (F_k\cdot \nabla)w - T_k
\end{cases},
$$
where
$$
\begin{cases}
S=a(\nabla u, \nabla u) - \sum_k a(\nabla F_k,\nabla F_k)\\
T_k=a(\nabla u, \nabla F_k)-a(\nabla F_k, \nabla u), \quad k=1,2,3.
\end{cases}
$$
We compute
\begin{align*}
\frac{d}{dt}\int \frac{|w|^2}{2}&=\int w\cdot \partial_t w \\
&= \int w\cdot \left( \sum_k(F_k\cdot\nabla)r_k - S\right)\\
&= -\int  \sum_kr_k\cdot(F_k\cdot\nabla)w - \int v\cdot S.
\end{align*}
Likewise,
\begin{align*}
\frac{d}{dt}\int \sum_k\frac{|r_k|^2}{2}&=\int \sum_kr_k\cdot \partial_t r_k \\
&=\int \sum_k-r_k \cdot T_k+ r_k\cdot(F_k\cdot\nabla)w.
\end{align*}
Consequently,
\begin{align*}
\frac{d}{dt}\int \frac{|w|^2}{2}+\sum_k\frac{|r_k|^2}{2}&= -\int\left\{w\cdot S + \sum_k r_k\cdot T_k\right\}\\
&\le |\nabla u|_{L^\I}|w|_{L^2}|\nabla u|_{L^2} + \sum_k|\nabla F_k|_{L^\I}|w|_{L^2}|\nabla F_k|_{L^2}\\
& +  \sum_k\left(|\nabla u|_{L^\I}|r_k|_{L^2}|\nabla F_k|_{L^2} + |\nabla F_k|_{L^\I}|r_k|_{L^2}|\nabla u|_{L^2}\right)\\
&\le C\left( |\nabla u|_{L^\I}+\sum_k |\nabla F_k|_{L^\I} \right)\left(|w|^2_{L^2}+ \sum_k |r_k|^2_{L^2}\right)
\end{align*}
as
$$
|\nabla v|_{L^2}\le C|\nabla \times v|_{L^2}
$$
for $v\in H^1$, $\nabla \cdot v=0$. Inequality \eqref{curlL2} now follows from Gronwall's inequality.

\par {\bf Step 3} (Applying Kato's inequality).  Now we are in a position to complete the proof.  Assume that \eqref{BKMcond} holds so that \eqref{curlL2} implies $w,r_k\in L^\I((0,T); L^2).$
By Kato's inequality \eqref{BIOT}, we have
$$
|\nabla u|_{L^\I} + \sum_k|\nabla F_k|_{L^\I}\le C\left\{ 1 +
\left(|w|_{L^\I}+\sum_k|r_k|_{L^\I}\right)\left(\ln(|u|_{H_3}+e)+\ln\left(\sum_k|F_k|_{H_3}+e\right)\right)
\right\}.
$$
By the energy estimate \eqref{ENERGY}, we also have
\begin{align*}
|u|_{H^3}+ \sum_k| F_k|_{H^3}&\le \left(|u_0|_{H^3}+ \sum_k|
F_{k,0}|_{H^3}\right)\exp\Biggl(C\int^t_0
\Biggl\{1+\left(|w|_{L^\I}+\sum_k|r_k|_{L^\I}\right)\\&\quad\times\left(\ln(|u|_{H_3}+e)+\ln\left(\sum_k|F_k|_{H_3}+e\right)\right)\Biggr\}ds\Biggr).
\end{align*}
Now setting
$$
y(t):=\ln\left(|u(t)|_{H^3}+e\right)+\ln\left(\sum_k| F_k(t)|_{H^3}+e\right), \quad 0\le t\le T
$$
gives
\begin{equation*}
y(t)\le B + C\int^t_0 \left\{1 +
\left(|w(s)|_{L^\I}+\sum_k|r_k(s)|_{L^\I}\right) y(s)\right\}ds.
\end{equation*}
Here $C$ is a universal constant and $B$ depends only on the $H^3$ norms of the initial data. By Gronwall's inquality, we conclude that $y(t)$ is bounded by a constant depending on $M$, $T$ and the $H^3$ norms of the inital data.  This bound establishes inequality \eqref{FinalEst} and completes
the proof of the theorem.
\end{proof}
Finally, we remark that results closely related to Theorem \ref{mainthm} have been obtained for two dimensional flow \cite{MAS} and for three dimensional flow in the so-called ``creeping flow" regime \cite{KUF}.

\appendix

\newpage


\end{document}